\newtheorem{theorem}{Theorem}
\newtheorem*{corollary}{Corollary}
\newtheorem*{remark}{Remark}
\newtheorem{definition}{Definition}
\newtheorem{lemma}{Lemma}
\newlength{\IEEECOLWIDTH}
\def\TITLE{A Robustness Measure of Transient Stability under Operational Constraints in Power Systems}
\def\TITLEnl{A Robustness Measure of Transient Stability under Operational Constraints in Power Systems}
\def\AUTHORS{Liviu Aolaritei, Dongchan Lee, Thanh Long Vu and Konstantin Turitsyn}
\def\1{\mathbf{1}}
\def\0{\mathbf{0}}
\DeclareMathOperator{\diag}{diag}
\begin{document}

\title{\TITLEnl}

\author{\AUTHORS% <-this % stops a space
}

\author{\AUTHORS% <-this % stops a space
\thanks{Liviu Aolaritei is with the Automatic Control Laboratory,
	ETH Zurich, 8092 Zurich, Switzerland. Email: \texttt{\small	aliviu@ethz.ch.}}%
\thanks{Dongchan Lee, Thanh Long Vu and Konstantin Turitsyn are with the Mechanical Engineering Department, Massachusetts Institute of Technology, Cambridge, MA. Email: \texttt{\small	\{dclee, longvu, turitsyn\}@mit.edu.}}%
}

\maketitle

\thispagestyle{empty}

\begin{abstract}
The aggressive integration of distributed renewable sources is changing the dynamics of the electric power grid in an unexpected manner. As a result, maintaining conventional performance specifications, such as transient stability, may not be sufficient to ensure its reliable operation in stressed conditions. In this paper, we introduce a novel criteria in transient stability with consideration of operational constraints over frequency deviation and angular separation. In addition, we provide a robustness measure of the region of attraction, which can quantify the ability of the post-fault system to remain synchronized even under disturbances. To assess this new stability specification, we adopt the notion of Input-to-State Stability (ISS) to the context of power systems and introduce a new class of convex Lyapunov functions, which will result in tractable convex-optimization-based stability certificates. As a result, we are able to quantify the level of disturbance a power system can withstand while maintaining its safe operation. We illustrate the introduced stability specification and certificate on the IEEE 9 bus system.   
\end{abstract}

\begin{IEEEkeywords}
Power systems, Stability of nonlinear systems, Lyapunov methods, Uncertain systems, Robust control.
\end{IEEEkeywords}

\section{Introduction}
\label{sec:introduction}

\IEEEPARstart{T}{he} electric power grid is undergoing the most substantial transformation since its emergence. The large scale integration of distributed renewable sources introduces significant uncertainty into the grid's operation and reduces the aggregate inertia, hence, reducing the ability of the grid to counteract disturbances. At the same time, the risk of contingencies is growing with the increasing number of extreme weather events, threatening the stability and security of the electric power grid. Therefore, new assessment and control tools are needed to detect and mitigate warning behaviours expected in the dynamics of the ongoing and future power grids. 

In traditional power grids, transient stability assessment determines whether a power system maintains its synchrony after experiencing a large disturbance, i.e., if the frequencies of the machines and loads in the post-fault dynamics transiently converge to the nominal value of 50 or 60 Hz and their angles transiently converge to some stable steady state values \cite{Chiang:2011eo,Chiang:1994cUEP,hiskens1989energy}. In the modern power grid, this conventional transient stability certificate may not be sufficient to ensure a reliable operation of the system. Indeed, the high penetration of renewable generation can significantly unbalance the power supply and demand, leading to large frequency deviation, e.g., more than 0.5 Hz on a 50 or 60 Hz network. Large frequency deviation in turn will activate frequency protective relays on the power system network to shed loads/generators or trip interconnection lines \cite{Ingleson2010}. Since the widespread protective relays are still poorly coordinated due to the large scale of the grid, load/generator shedding and line tripping, in the worst case, may result in further cascading failures and even power blackout. 

To avoid the aforementioned harmful responses, we propose to consider a \emph{safe and robust transient stability} specification, which will not only ensure the conventional transient stability, but also guarantee that the frequencies and angles are within the operational constraints, i.e., the frequencies are in a small neighborhood of the nominal value, and the angular differences do not greatly exceed $\pi/2$. In addition, the system has to be robustly stable with respect to small disturbances in power injections, which are always present in the modern power system with high penetration of renewables.

A similar problem was considered in \cite{lee18}, where the same authors proposed an input-output stability framework to quantify the robustness against disturbances of the system operating at an equilibrium point. Differently, in this paper, the robustness against disturbances of the post-fault trajectory is considered, where the fault-dependent scenario is included in the assessment.

Technically, to analyze these new power system stability specifications, this paper brings the following novelties:

\begin{itemize}
    \item First, we introduce a new class of Lyapunov functions for the power system transient stability analysis. Unlike the energy function and usual Lyapunov functions for the power system stability analysis \cite{Chiang:2010gi,VuTuritsyn:2014}, the Lyapunov functions proposed here are convex on the whole state space. This key property leads to a \emph{convex-optimization-based stability certificate}, which allows us to quickly assess the safe and robust transient stability of the power system. 
    
    \item Second, we develop the ideas of Lyapunov function family approach \cite{VuTuritsyn:2014,VuTAC2017} to our new class of convex Lyapunov functions characterized by the solution of a linear matrix inequality (LMI). Each function in this new family of convex Lyapunov functions can provide an estimate of the stability region within the operational constraints, offering the adaptation of our stability certificate to given contingencies \cite{VuTuritsyn:2014} and reducing the conservativeness of the other Lyapunov function-based stability analyses.
    
    \item Third, we use Input-to-State Stability (ISS) arguments \cite{Sontag, dashkovskiy2007iss} to analyze the \emph{stability and robustness of the post-fault dynamics} with respect to disturbances in the power injections. In particular, we establish a measure of robustness for transient stability, which quantifies the maximum disturbance that the post-fault dynamics can incorporate, while maintaining its transient stability within the operational constraints. Due to the physics of the nonlinear power flow, there are possibly several equilibrium points that co-exist with their own stability region, therefore, the electric power system lacks global stability. As a result, in our analysis, we consider a local version of ISS \cite{mironchenko2016local}.
\end{itemize}

The remaining of this paper is organized as follows. In Section II, we present the model of the multi-machine power system, and we mathematically formulate the considered problem. In Section III, we introduce the new class of convex Lyapunov functions, which we then use to assess the transient stability under operational constraints. Section IV presents a local version of ISS, which is then used to compute a bound on the maximum disturbance that can enter in the post-fault dynamics. Finally, Section V demonstrates our stability certificates on an IEEE 9 bus system, and Section VI concludes the paper.

\section{Power system model}
\label{sec:model}

In the following we will make use of the following notation. We denote by $\mathbb{0}$, $\mathbb{1}$, and $I$ the zero matrix, the all-ones vector, and the identity matrix, of appropriate dimensions, respectively. Given a vector $x$ and two matrices $A$ and $B$, let $\diag(x)$ and $\diag(A,B)$ denote the diagonal matrix with the elements of $x$ on the diagonal, and the block-diagonal matrix with the matrices $A$ and $B$ on the diagonal, respectively. Moreover, we will use the notation $\sigma_{\text{min}}(A)$ and $\sigma_{\text{max}}(A)$ to denote the minimum and the maximum eigenvalue of $A$, respectively. The inequalities $\prec$, $\preceq$, $\succ$ and $\succeq$ define the matrix inequalities. Finally, we denote by $\lVert \cdot \rVert$, $\lVert \cdot \rVert_{\infty}$, and $\lVert \cdot \rVert_{\mathcal{L}_{\infty}}$ the Euclidean, infinity, and $\mathcal{L}_{\infty}$ norms, respectively.

\subsection{Swing equation model}
\label{subsec:swing}

The aging power grid with stressed load suffers from several contingencies. During a contingency, the system evolves according to the fault-on dynamics, moving away from the pre-fault equilibrium point. After the contingency is cleared or self-clears, the system experiences the post-fault dynamics. The post-fault dynamics is said to be transiently stable if the state of the system converges to a stable post-fault equilibrium point. In this paper, we consider the swing equation model to describe the post-fault dynamics of the multi-machine power system. This is a simplified dynamic model, that focuses on the relationship between the active power and the angles over the lossless power network with constant voltages.

A generator $i$ is characterized by its rotor angle $\theta_i$ and its angular velocity $\omega_i=\dot{\theta_i}$, and its dynamics is described by the following equations:
\begin{align}
\label{eq:swingmodel}
\begin{split}
&\dot{\theta_i} = \omega_i \\
&m_i \dot{\omega_i} + d_i \omega_i = P_{i}-P_{e,i}
\end{split}
\end{align}
where $m_i$ and $d_i$ are its dimensionless moment of inertia and damping action, respectively. Moreover, $P_{i}$ and $P_{e,i}$ are its effective dimensionless mechanical torque acting on the rotor and its effective dimensionless electrical power output, respectively. The electrical power output is given by
\begin{equation}
\label{eq:electricalpower}
P_{e,i} = G_i V_i^2 + \sum\limits_{j \in \mathcal{N}_i} y_{ij} \sin \theta_{ij} 
\end{equation}

Here, $\theta_{ij}=\theta_i-\theta_j$, and $y_{ij} = B_{ij} V_i V_j$, where $B_{ij}$ is the (normalized) susceptance of the line connecting the generators $i$ and $j$. The value of $V_i$ represents the voltage magnitude at the terminal of the $i^{\text{th}}$ generator and it is assumed constant. Finally, $\mathcal{N}_j$ is the set of neighboring generators of the $i^{\text{th}}$ generator.

The multi-machine power system is described by an undirected graph $\mathcal{G}(\mathcal{N}, \mathcal{E})$, where $\mathcal{N} = \{1,\cdots, n\}$ is the set of generators and $\mathcal{E}\subseteq \mathcal{N} \times \mathcal{N}$ is the set of transmission lines connecting the $n$ generators. Let $E$ denote the incidence matrix of the graph.

We define $\theta$, $\omega$, $P$ as the vectors obtained by stacking the scalars $\theta_i$, $\omega_i$, $P_i$, respectively, for $i \in \mathcal{N}$. Moreover, we define the diagonal matrices $M$ and $D$, which have the elements $m_i$ and $d_i$ on the diagonal, respectively, for $i \in \mathcal{N}$. Finally, we define the diagonal matrix $Y$, which has the elements $y_{ij}$ on the diagonal, for $(i,j) \in \mathcal{E}$.

Using the vector notation, the multi-machine power system can be described by:
\begin{align}
\begin{split}
&\dot{\theta} = \omega \\
&M \dot{\omega} + D \omega = P- E Y \sin\left(E^T \theta\right)
\end{split}
\label{eq:model}
\end{align}

The system \eqref{eq:model} is invariant under the transformations $\theta \rightarrow \theta + \tilde{\omega} t$, $\omega \rightarrow \omega + \tilde{\omega}$ and $P \rightarrow P + D \tilde{\omega}$ for any constant vector $\tilde{\omega} \in \mathbb{R}^n$, $\tilde{\omega} \in \text{Span}\{\mathbb{1}\}$. Without loss of generality, we assume that the angular velocity $\omega$ is defined with respect to the synchronous rotations $\tilde{\omega}$.
In normal conditions, the system reaches a synchronized state where all the machines rotate with the same angular velocity, and the angles of the machines are following a circular trajectory with $\theta = \theta^* + \tilde{\omega} t$ with $\theta^*$ satisfying the following algebraic equations:
\begin{equation}
\label{eq:equil}
P = E Y \sin\left(E^T \theta^*\right)
\end{equation}

The equilibrium point $\left[\theta^*,\mathbb{0}\right]$ is not unique, since every uniform shift $c \in \mathbb{R}$ in the rotor angles, $\left[\theta^* + c \mathbb{1},\mathbb{0}\right]$, results in another equilibrium point. However, the equilibrium point is uniquely characterized by the angle differences contained in the vector $E^T \theta^*$, and which solve the system \eqref{eq:equil}. In order to have an unique equilibrium point, we assume the existence of an infinite bus in the system, which will be used as a reference angle.

In practical situations the steady state equilibrium corresponds to relatively small deviations in angles, so that generally $\lVert E^T \theta^*\rVert_{\infty} < \pi / 2$ \cite{Dorfler:2013}.

\subsection{Lur'e system representation}
\label{subsec:lure}

The system \eqref{eq:model} can be rewritten in a Lur'e form, i.e., as a linear system with a nonlinear state feedback. To do so, we linearize the system around the equilibrium $\left[\theta^*,\mathbb{0}\right]$, and, by defining the system state as $x = [\left(\theta-\theta^*\right)^T, \omega^T]^T$, the network equations \eqref{eq:model} can be rewritten in a compact form as $\dot{x}=A x - B \phi(z)$, $z = C x$, where $\phi$ represents the nonlinear state feedback, and the matrices $A$, $B$, $C$ are equal to:
\begin{equation*}
A=
\begin{bmatrix}
\mathbb{0} & I \\
-M^{-1} E Y \diag\left(\cos\left(E^T \theta^*\right)\right)E^T & -M^{-1} D
\end{bmatrix},
\end{equation*}
\begin{equation*}
B=
\begin{bmatrix}
\mathbb{0} \\
M^{-1} E Y
\end{bmatrix},
\quad 
C=
\begin{bmatrix}
E^T & \mathbb{0}
\end{bmatrix}
\end{equation*}
where the nonlinearity $\phi \in \mathbb{R}^{|\mathcal{E}|}$ is composed by the following elements $\phi_{k} = \left(\sin \theta_{ij}-\sin\theta_{ij}^*\right) - \cos\theta_{ij}^*\left(\theta_{ij} - \theta_{ij}^*\right)$, where $k \in \{1,\ldots,|\mathcal{E}|\}$ corresponds to the edge $(i,j)$ in the network. Notice that the nonlinearity $\phi$ is decentralized, i.e., $\phi(z) = [\phi_1(z_1),\ldots,\phi_{|\mathcal{E}|}(z_{|\mathcal{E}|})]$.

The analysis carried out in this paper is based on the observation that the nonlinearity $\phi_k(z_k)$ can be lower and upper bounded by two linear functions $\underline{\delta}_k z_k$ and $\overline{\delta}_k z_k$, i.e.,
\begin{equation}
\label{eq:sector}
\frac{\phi_k(z_k)}{z_k} \in \left[\underline{\delta}_k,\overline{\delta}_k\right]
\end{equation}
where $\underline{\delta}_k$ and $\overline{\delta}_k$ are functions of the set where $z_k$ is restricted. In other words, by restricting the values of $z_k$ to smaller sets, tighter bounds on the nonlinearity $\phi_k(z_k)$ can be obtained.

In our case, $z_k = \theta_{ij}-\theta_{ij}^*$, so restricting $z_k$ translates into restricting the differences between the angles of the neighboring generators. From a practical perspective, large angle differences are strongly undesired, and it is common practice to consider angle differences which do not greatly exceed $\pi/2$. In the following lemma, we show how the nonlinearity can be bounded as in \eqref{eq:sector} in two sets of practical interest.

\begin{lemma}
\label{lemma:sector}
The nonlinearity $\phi_k(z_k)$ is bounded by the linear functions $\underline{\delta}_k z_k$ and $\overline{\delta}_k z_k$, where 
\begin{enumerate}
	\item[i)] $\underline{\delta}_k = -\cos\theta_{ij}^*$ and $\overline{\delta}_k = 1-\cos\theta_{ij}^*$ inside the set $\mathcal{P}_1 = \{x: \; \theta_{ij} \in [-\pi- \theta_{ij}^*,\pi- \theta_{ij}^*], \; \forall \; (i,j) \in \mathcal{E}\}$.
	\item[ii)] $\underline{\delta}_k = \xi_k-\cos\theta_{ij}^*$ and $\overline{\delta}_k = 1-\cos\theta_{ij}^*$ inside the set $\mathcal{P}_2 = \; \{x: \theta_{ij},\theta_{ij}^* \in [-\pi/2,\pi/2], \; \forall \; (i,j) \in \mathcal{E}\}$, where
\end{enumerate}
\begin{equation}
\label{eq:xi}
\xi_k = \frac{1-\sin | \theta_{ij}^*|}{\pi/2 - |\theta_{ij}^*|}
\end{equation}
\end{lemma}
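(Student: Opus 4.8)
The plan is to reduce the statement to a one–dimensional inequality for the \emph{secant slope} of the sine function. Fix an edge $k\leftrightarrow(i,j)$ and write $u=\theta_{ij}$, $u^*=\theta_{ij}^*$, so that $z_k=u-u^*$ and $\phi_k(z_k)=\sin u-\sin u^*-\cos u^*\,(u-u^*)$. Define $g(u)=(\sin u-\sin u^*)/(u-u^*)$ for $u\neq u^*$ and $g(u^*)=\cos u^*$ by continuity; then $\phi_k(z_k)/z_k=g(u)-\cos u^*$, and \eqref{eq:sector} with the claimed $\underline\delta_k,\overline\delta_k$ is \emph{equivalent} to the two scalar bounds $g(u)\le 1$ and $g(u)\ge \underline\delta_k+\cos u^*$ on the corresponding set. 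The upper bound holds without any restriction on $x$, hence on both $\mathcal{P}_1$ and $\mathcal{P}_2$: by the mean value theorem $g(u)=\cos\zeta$ for some $\zeta$ between $u$ and $u^*$, so $g(u)\le 1$, i.e. $\phi_k(z_k)/z_k\le 1-\cos u^*=\overline\delta_k$.

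For the lower bound in case~i) I would use the product–to–sum identity $\sin u-\sin u^*=2\cos\frac{u+u^*}{2}\sin\frac{u-u^*}{2}$ to write $g(u)=\cos\!\frac{u+u^*}{2}\cdot\frac{\sin((u-u^*)/2)}{(u-u^*)/2}$. On $\mathcal{P}_1$ one has $\frac{u+u^*}{2}\in[-\pi/2,\pi/2]$, so the first factor is nonnegative; and because $\lVert E^{T}\theta^*\rVert_{\infty}<\pi/2$, on $\mathcal{P}_1$ one has $|u-u^*|\le\pi+2|u^*|<2\pi$, so $\bigl|(u-u^*)/2\bigr|<\pi$ and $\sin(t)/t\ge0$ for $t=(u-u^*)/2$. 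Hence $g(u)\ge0$, i.e. $\phi_k(z_k)/z_k\ge-\cos u^*=\underline\delta_k$.

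Case~ii) is the substantive part. By oddness of $\sin$, replacing $(u,u^*)$ by $(-u,-u^*)$ changes neither $g$ nor $\xi_k$, so I may assume $u^*\ge0$; with this convention $\xi_k=g(\pi/2)$ (see \eqref{eq:xi}), and the claim $\phi_k(z_k)/z_k\ge\xi_k-\cos u^*$ on $\mathcal{P}_2$ becomes $g(u)\ge\xi_k$ for every $u\in[-\pi/2,\pi/2]$, i.e. the secant slope is minimized at the endpoint $u=\pi/2$. Set $\psi(u)=\sin u-\sin u^*-\xi_k(u-u^*)$; it suffices to prove $\psi(u)\,(u-u^*)\ge0$ on $[-\pi/2,\pi/2]$, and by construction $\psi(u^*)=\psi(\pi/2)=0$. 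First I would record the auxiliary fact $\xi_k\le\cos u^*$, which says the secant of $\sin$ from $u^*$ to $\pi/2$ lies below its tangent at $u^*$ and follows from concavity of $\sin$ on $[0,\pi/2]$. On $[u^*,\pi/2]$ the function $\psi$ is concave ($\psi''=-\sin u\le0$) and vanishes at both endpoints, hence $\psi\ge0$ there. On $[-\pi/2,u^*]$ one has $\psi'(u)=\cos u-\xi_k$, which is nonnegative exactly on $[-\arccos\xi_k,\arccos\xi_k]$, an interval containing $u^*$ because $\cos u^*\ge\xi_k$; therefore $\psi$ is decreasing then increasing on $[-\pi/2,u^*]$, so its maximum there equals $\max\{\psi(-\pi/2),\psi(u^*)\}=\max\{\psi(-\pi/2),0\}$. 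It remains to check $\psi(-\pi/2)\le0$, which after clearing the positive factor $\pi/2-u^*$ reduces to $u^*\le\frac{\pi}{2}\sin u^*$ on $[0,\pi/2]$; this holds because $\frac{\pi}{2}\sin u-u$ is concave and vanishes at $u=0$ and $u=\pi/2$. Combining, $\psi\le0$ on $[-\pi/2,u^*]$ and $\psi\ge0$ on $[u^*,\pi/2]$, which gives $g(u)\ge\xi_k$ and hence the claim.

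The main obstacle is exactly this last step: since $\sin$ has an inflection at the origin it is neither convex nor concave on $[-\pi/2,\pi/2]$, so one cannot simply quote concavity as on $[0,\pi/2]$; establishing that the secant slope over $[-\pi/2,\pi/2]$ is actually minimized at the far endpoint $u=\frac{\pi}{2}\sign(u^*)$ requires the sign analysis of $\psi'$ on $[-\pi/2,u^*]$ together with the endpoint estimate $u^*\le\frac{\pi}{2}\sin u^*$. Everything else is a short consequence of the mean value theorem and elementary trigonometric identities.
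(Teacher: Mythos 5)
Your proof is correct, and for parts of it you go well beyond what the paper itself writes down. The reduction is the same as the paper's -- everything hinges on bounding the secant slope $g(u)=(\sin u-\sin u^*)/(u-u^*)$ from above by $1$ (mean value theorem / $|\sin a-\sin b|\le|a-b|$, identical in both) and from below on each set. For case i) the paper merely observes that $\sin\theta_{ij}-\sin\theta_{ij}^*$ vanishes at the equilibrium and on the boundary of $\mathcal{P}_1$; your product-to-sum factorization $g(u)=\cos\frac{u+u^*}{2}\cdot\frac{\sin((u-u^*)/2)}{(u-u^*)/2}$ makes the nonnegativity of $g$ on $\mathcal{P}_1$ explicit and is a cleaner way to reach the same conclusion. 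The real divergence is case ii): the paper's proof consists of noting the extreme values of $\sin\theta_{ij}-\sin\theta_{ij}^*$ at $\theta_{ij}=\pm\pi/2$ and then asserting the bound, which silently skips the key fact that the secant slope over all of $[-\pi/2,\pi/2]$ -- including points on the opposite side of $\theta_{ij}^*$, where $\sin$ is convex rather than concave -- is minimized at the far endpoint $\frac{\pi}{2}\sign(\theta_{ij}^*)$. Your analysis of $\psi(u)=\sin u-\sin u^*-\xi_k(u-u^*)$ (concavity on $[u^*,\pi/2]$, the decreasing-then-increasing structure on $[-\pi/2,u^*]$ via $\xi_k\le\cos u^*$, and the endpoint check reducing to Jordan's inequality $u^*\le\frac{\pi}{2}\sin u^*$) supplies exactly the missing argument, so your write-up is a complete proof of a step the paper essentially takes on faith. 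The only cost is length; what it buys is an actual verification that $\xi_k$ as defined in \eqref{eq:xi} is a valid (and tight, since it is attained at $u=\frac{\pi}{2}\sign(u^*)$) lower sector bound.
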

\begin{proof}
Since $\phi_k(z_k)/z_k$ is equal to
\begin{equation*}
\frac{\sin\theta_{ij}-\sin\theta_{ij}^*}{\theta_{ij} - \theta_{ij}^*} - \cos\theta_{ij}^*,
\end{equation*}
and $|\sin\theta_{ij}-\sin\theta_{ij}^*|\leq |\theta_{ij} - \theta_{ij}^*|$, $\forall \; \theta_{ij}$, the upper bound can be chosen as $\overline{\delta}_k = 1-\cos\theta_{ij}^*$ inside both $\mathcal{P}_1$ and $\mathcal{P}_2$.

Inside $\mathcal{P}_1$, the function $\sin\theta_{ij}-\sin\theta_{ij}^* = 0$ in the equilibrium $\theta_{ij} = \theta_{ij}^*$, and on the boundary. Therefore, the lower bound can be chosen as $\underline{\delta}_k = -\cos\theta_{ij}^*$. 

Moreover, $\sin\theta_{ij}-\sin\theta_{ij}^*$ has a maximum in $\theta_{ij} = \pi/2$, equal to $1-\sin\theta_{ij}^*$, and a minimum in $\theta_{ij} = -\pi/2$, equal to $-1-\sin\theta_{ij}^*$. Therefore, inside $\mathcal{P}_2$, the lower bound can be chosen as $\underline{\delta}_k = \xi_k-\cos\theta_{ij}^*$, with $\xi_k$ defined in \eqref{eq:xi}.
\end{proof}

%Notice that $\mathcal{P}_2 \subset \mathcal{P}_1$. Moreover, inside both $\mathcal{P}_1$ and $\mathcal{P}_2$ the upper bound is equal, $\overline{\delta}_k = 1-\cos\theta_{ij}^*$, while the lower bound increases from $\underline{\delta}_k = -\cos\theta_{ij}^*$ to $\underline{\delta}_k = \xi_k-\cos\theta_{ij}^*$, while restricting the set of angle differences from $\mathcal{P}_1$ to $\mathcal{P}_2$. 

%This observation can be used to tighten the lower bounds if operational constraints on $\theta_{ij}$, $\forall (i,j) \in \mathcal{E}$, are imposed. Indeed, when restricting the angle differences inside a set $\mathcal{P}$, with $\mathcal{P}_2 \subset \mathcal{P} \subset \mathcal{P}_1$, an optimized lower bound $\underline{\delta}_k = \varphi_k-\cos\theta_{ij}^*$ can be found for the nonlinearity $\phi_k$, with $\varphi_k \in (0,\xi_k)$.

The lower bounds in Lemma \ref{lemma:sector} can be tighten if operational constraints on $\theta_{ij}$, $\forall (i,j) \in \mathcal{E}$, are imposed. Indeed, when restricting the angle differences inside a set $\mathcal{P}$, with $\mathcal{P}_2 \subset \mathcal{P} \subset \mathcal{P}_1$, an optimized lower bound $\underline{\delta}_k = \varphi_k-\cos\theta_{ij}^*$ can be found for the nonlinearity $\phi_k$, with $\varphi_k \in (0,\xi_k)$.

\begin{remark}
The stability analysis proposed in this paper holds for a general Lur'e system. This allows the direct extension of our methodology and results to more general power system models, which can be written in a Lur'e form with the nonlinearity composed by the same elements. For example, in \cite{lee18} we consider a structure-preserving model with first order turbine governor dynamics.
%Moreover, additional linear differential equations (except for the voltage dynamics, which introduces other nonlinearities due to the terms $B_{ij} V_i V_j$) could be included in the model without compromising the validity of the analysis.
\end{remark}

\subsection{Problem formulation}
\label{probform}

 We consider practical operational constraints defined as sets of the form $\mathcal{P} = \{x: \; |\theta_{ij}| \leq \overline{\theta}_{ij}, \; \forall (i,j) \in \mathcal{E}\}$, with $\mathcal{P}_2 \subseteq \mathcal{P} \subseteq \mathcal{P}_1$, and $\mathcal{S} = \{x: \; |\omega_i|\leq \overline{\omega}_i, \; \forall i \in \mathcal{N}\}$. 

Now let $\eta_i$, for $i\in \mathcal{N}$, define a norm-bounded time-varying disturbance, i.e., $|\eta_i| \leq \overline{\eta}_i$, entering in the dynamics of the $i^{\text{th}}$ generator as follows: 
\begin{align}
\label{eq:modeldisturbance}
\begin{split}
&\dot{\theta_i} = \omega_i \\
&m_i \dot{\omega_i} + d_i \omega_i = P_{i}- \sum\limits_{j \in \mathcal{N}_i} y_{ij} \sin \theta_{ij} + \eta_i
\end{split}
\end{align}

Therefore, the disturbed system model can be written in a Lur'e form as $\dot{x}=A x - B \phi(z) + H \eta$, $z = C x$, where
\begin{equation}
    H = \begin{bmatrix}
    \mathbb{0} \\
    M^{-1}
    \end{bmatrix}
\end{equation}

The problem can be now mathematically formulated as follows. Let $x^*$, $x_f$ and $x(t)$ define the post-fault equilibrium point, the fault-cleared state (the state of the system when the fault is cleared), and the state of the system during the post-fault dynamics, respectively. Moreover, let $\mathcal{X}$ define a region in the state space, inside the operational constraints, i.e., $\mathcal{X} \subseteq \mathcal{P} \cap \mathcal{S}$. The analysis carried out in this paper concentrates on computing in a scalable way the maximum region $\mathcal{X}$ and the associated bound $\overline{\eta}$ on the disturbance such that the following two conditions hold:

\smallskip
\begin{itemize}
    \item[(i)] $x_f \in \mathcal{X}$\;\,and\,\;$\eta = \mathbb{0}$  \,\,\quad\quad\; $\Rightarrow$ \;$x(t) \in \mathcal{X}$\;\,and\,\;$x(t) \rightarrow x^*$
    \item[(ii)] $x_f \in \mathcal{X}$\;\,and\,\;$\lVert \eta \rVert_{\mathcal{L}_{\infty}} \leq \overline{\eta}$  \; $\Rightarrow$ \,\,$x(t) \in \mathcal{X}$
\end{itemize}
\smallskip

Notice that the first condition corresponds to the classical transient stability assessment using Direct Methods, with the difference that in our formulation the state is not allowed to violate the operational constraints. The second condition aims at computing a robustness measure of the transient stability, defined as the amount of disturbance $\eta$ that can enter in the post-fault system dynamics such that the trajectory $x$ does not leave the region $\mathcal{X}$.

\section{Transient stability analysis}
\label{sec:reachability}

In this section we concentrate on the first condition stated in Section \ref{probform}. We thus analyze the undisturbed system ($\eta = \mathbb{0}$) and propose a scalable method to construct an invariant set $\mathcal{X}$ inside the operational constraints. Moreover, we prove that in the absence of the disturbance, the post-fault dynamics converges to the post-fault equilibrium ($x(t) \rightarrow x^*$) if the fault-cleared state resides inside this set ($x_f \in \mathcal{X}$). In Section \ref{sec:robust} we will focus on the second condition and propose a robust stability certificate, by finding a bound $\overline{\eta}$ on the disturbance $\eta$ such that the set computed in this section remains invariant.

\subsection{Convex Lyapunov function}
\label{subsec:lyapunov}

In order to construct an invariant set $\mathcal{X}$ inside the operational constraints we use Lyapunov arguments. We propose the following Lyapunov function candidate:
\begin{equation}
\label{eq:lyapunov}
V(x) = x^T P x + 2 \sum\limits_{k=1}^{|\mathcal{E}|} \lambda_k \int_0^{z_k(x)} \left(\overline{\delta}_k s - \phi_k(s)\right) ds
\end{equation}
where $P = P^T \in \mathbb{R}^{n \times n}$ is positive definite, and $\lambda_k$, $k \in \{1,\ldots,|\mathcal{E}|\}$, are non-negative scalars.

In the following lemma we prove the convexity of $V(x)$, a feature of great importance for the scalability of the proposed method.

\begin{lemma}
\label{lemma:convexlyap}
The Lyapunov function candidate $V(x)$ defined in \eqref{eq:lyapunov}, with $\overline{\delta}_k = 1-\cos \theta_{ij}^*$, is strongly convex for all $x \in \mathbb{R}^{2n}$.
\end{lemma}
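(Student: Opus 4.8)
The plan is to separate $V$ into the quadratic part $x^{\t}Px$, which is strongly convex since $P\succ 0$, and a second part that is a nonnegative combination of convex scalar functions composed with linear maps, hence convex; the sum of a strongly convex and a convex function is then strongly convex. Concretely, write $z_k(x)=c_k^{\t}x$ for the $k$-th component of $z=Cx$, where $c_k^{\t}$ is the $k$-th row of $C=\begin{bmatrix}E^{\t}&\mathbb{0}\end{bmatrix}$, and define the scalar function $g_k(\zeta)=\int_0^{\zeta}\big(\overline{\delta}_k s-\phi_k(s)\big)\,ds$, so that $V(x)=x^{\t}Px+2\sum_{k=1}^{|\mathcal{E}|}\lambda_k\,g_k(c_k^{\t}x)$.

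The key step is to show that each $g_k$ is convex on all of $\mathbb{R}$. Since $\phi_k$ is smooth, $g_k$ is $C^2$ with $g_k'(\zeta)=\overline{\delta}_k\zeta-\phi_k(\zeta)$ and $g_k''(\zeta)=\overline{\delta}_k-\phi_k'(\zeta)$. Writing the nonlinearity as a function of its scalar argument, $\phi_k(\zeta)=\sin(\theta_{ij}^*+\zeta)-\sin\theta_{ij}^*-\cos\theta_{ij}^*\,\zeta$, one gets $\phi_k'(\zeta)=\cos(\theta_{ij}^*+\zeta)-\cos\theta_{ij}^*$. With the prescribed choice $\overline{\delta}_k=1-\cos\theta_{ij}^*$ the two $\cos\theta_{ij}^*$ terms cancel and
\begin{equation*}
g_k''(\zeta)=\big(1-\cos\theta_{ij}^*\big)-\big(\cos(\theta_{ij}^*+\zeta)-\cos\theta_{ij}^*\big)=1-\cos(\theta_{ij}^*+\zeta)\ \geq\ 0
\end{equation*}
for every $\zeta\in\mathbb{R}$. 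Hence $g_k$ is convex globally. This is precisely the point where the specific value of $\overline{\delta}_k$ matters: it is the \emph{global} upper sector bound from Lemma \ref{lemma:sector}, and a tighter, state-restricted bound would destroy global convexity.

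To conclude, I would assemble the Hessian: $\nabla^2 V(x)=2P+2\sum_{k=1}^{|\mathcal{E}|}\lambda_k\,g_k''(c_k^{\t}x)\,c_kc_k^{\t}$. The second term is a sum of positive-semidefinite rank-one matrices, because $\lambda_k\geq 0$ and $g_k''\geq 0$ by the previous step, so $\nabla^2 V(x)\succeq 2P$ for all $x\in\mathbb{R}^{2n}$. Since $P\succ 0$, this yields $\nabla^2 V(x)\succeq 2\sigma_{\text{min}}(P)\,I\succ 0$ uniformly in $x$, which is exactly strong convexity (with modulus $2\sigma_{\text{min}}(P)$). Alternatively, one could avoid differentiating under the integral sign and argue directly: $g_k$ is an integral of the function $\overline{\delta}_k s-\phi_k(s)$, whose derivative $1-\cos(\theta_{ij}^*+s)\geq 0$ makes the integrand nondecreasing, hence $g_k$ convex, and then convexity is preserved under the linear substitution $\zeta\mapsto c_k^{\t}x$ and the nonnegative weighting.

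There is no substantive obstacle; the only care needed is bookkeeping — confirming that $z_k(x)$ enters $V$ linearly (so convexity survives the composition), that $g_k$ is genuinely $C^2$ so the Hessian computation is valid, and that the cancellation in $g_k''$ is exactly the one produced by $\overline{\delta}_k=1-\cos\theta_{ij}^*$.
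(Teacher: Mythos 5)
Your proof is correct and follows essentially the same route as the paper: both compute the Hessian, identify the second-derivative weight $1-\cos\theta_{ij}\geq 0$ coming from the cancellation induced by $\overline{\delta}_k=1-\cos\theta_{ij}^*$, observe that the resulting term $\sum_k 2\lambda_k(1-\cos\theta_{ij})c_kc_k^{\t}$ (the paper's weighted Laplacian $\diag(L,\mathbb{0})$) is positive semi-definite, and lower-bound the Hessian by a positive multiple of the identity via $P\succ 0$. Your version is in fact slightly more careful, since you carry the factor $2$ from $x^{\t}Px$ correctly and make the global validity of the sector slope $\overline{\delta}_k$ explicit.
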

\begin{proof}
The Hession of $V(x)$ can be computed as $P+\diag\left(L,\mathbb{0}\right)$, with $L=E\diag(2\lambda_k(1-\cos\theta_{ij}))_{k\sim(i,j)\in\mathcal{E}}E^T$, where $k\sim(i,j)$ indicates that $k$ refers to the edge $(i,j)$. Since $1-\cos\theta_{ij} \geq 0, \; \forall (i,j)\in\mathcal{E}$, the matrix $L$ is a symmetric Laplacian matrix and therefore positive semi-definite. Therefore, the Hessian is lower-bounded by $\sigma_{min}(P) I$ and $V(x)$ is a strongly convex function.
\end{proof}

\subsection{Transient stability under operational constraints}
\label{subsec:reachconstr}

 Recall the sets $\mathcal{P}$ and $\mathcal{S}$ which correspond to the operational constraints. Let $\mathcal{P}_{ij}^-$ define the boundary of $\mathcal{P}$ corresponding to the equality $\theta_{ij} = -\overline{\theta}_{ij}$, and $\mathcal{P}_{ij}^+$ the boundary corresponding to the equality $\theta_{ij} = \overline{\theta}_{ij}$. We now consider the following set of $2 |\mathcal{E}|$ optimization problems, two for every edge $(i,j) \in \mathcal{E}$:
\begin{equation}
\label{eq:opt1}
\begin{aligned}
V_{ij}^- = \; & \underset{x}{\text{min}} 
& & V(x) \\
&\text{s.t.} 
& & x \in \mathcal{P}_{ij}^- \\
& & & \omega_i \leq \omega_j \\
\end{aligned} \quad\quad\quad
\begin{aligned}
V_{ij}^+ =\; & \underset{x}{\text{min}}
& & V(x) \\
&\text{s.t.} 
& & x \in \mathcal{P}_{ij}^+ \\
& & & \omega_i \geq \omega_j \\
\end{aligned}
\end{equation}

Now let $\mathcal{S}_{i}^-$ define the boundary of $\mathcal{S}$ corresponding to the equality $\omega_i = -\overline{\omega}_i$ and $\mathcal{S}_{i}^+$ the boundary corresponding to the equality $\omega_i = \overline{\omega}_i$. We now consider the following set of $2 n$ optimization problems, two for every node $i \in \mathcal{N}$:
\begin{equation}
\label{eq:opt2}
\begin{aligned}
W_{i}^- = \; & \underset{x}{\text{min}} 
& & V(x) \\
&\text{s.t.} 
& & x \in \mathcal{S}_{i}^- \\
\end{aligned} \quad\quad
\begin{aligned}
W_{i}^+ =\; & \underset{x}{\text{min}} 
& & V(x) \\
&\text{s.t.} 
& & x \in \mathcal{S}_{i}^+ \\
\end{aligned}
\end{equation}
%\begin{equation}
%\label{eq:opt2}
%\begin{aligned}
%V_{\text{max}} = \; & \underset{x}{\text{max}} 
%& & V(x) \\
%&\text{s.t.} 
%& &  V(x) \leq V^*\\
%& & & x \in \mathcal{S} \\
%\end{aligned}
%\end{equation}

Since $V(x)$ is a convex function and the constraints are linear, the optimization problems in \eqref{eq:opt1} and \eqref{eq:opt2} are convex. As a consequence, their solutions can be obtained in polynomial time.

Before stating the main result of this section, we define the matrices $\Lambda = \diag(\lambda_k)_{k \in \{1,\ldots,|\mathcal{E}|\}}$, $\underline{\Delta} = \diag(\underline{\delta}_k)_{k \in \{1,\ldots,|\mathcal{E}|\}}$, $\overline{\Delta} = \diag(\overline{\delta}_k)_{k \in \{1,\ldots,|\mathcal{E}|\}}$, and 
\begin{equation}
R = 
\begin{bmatrix}
R_{11} & R_{12} \\
R_{12}^T & R_{22}
\end{bmatrix}
\label{eq:R}
\end{equation}
where $R_{11}$, $R_{12}$ and $R_{22}$ are defined as:

$R_{11} \; := \; A^T \left(P+C^T \Lambda \overline{\Delta} C\right) + \left(P+C^T \overline{\Delta}  \Lambda C\right) A $

\quad\quad\quad\quad$ - 2 C^T \underline{\Delta} \Gamma \overline{\Delta} C $

$R_{12} \;:=\; -P B - A^T C^T \Lambda + C^T (\underline{\Delta}+\overline{\Delta}) \Gamma $

$R_{22} \;:=\; -2 \Gamma$

\noindent with $\Gamma \in \mathbb{R}^{|\mathcal{E}| \times |\mathcal{E}|}$ a positive definite diagonal matrix.

\begin{theorem}
\label{thm1}
Let $V_{\text{max}} = \text{min}\{V^*,W^*\}$, with $V^* = \text{min}\{V_{ij}^-,V_{ij}^+\}_{(i,j) \in \mathcal{E}}$ and $W^* = \text{min}\{W_{i}^-,W_{i}^+\}_{i \in \mathcal{N}}$. If $R \preceq 0$, the set $\mathcal{X} = \{x: \; V(x) \leq V_{\text{max}}\} \, \cap \, \mathcal{P}$ is an invariant set inside the operational constraints. Moreover, any trajectory of the system \eqref{eq:model} originating inside this set converges to the post-fault equilibrium point.
\end{theorem}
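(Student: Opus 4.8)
The plan is to establish four facts in turn: (a) the inclusion $\mathcal{X}\subseteq\mathcal{P}\cap\mathcal{S}$; (b) that $V$ is nonincreasing along solutions as long as the state stays in $\mathcal{P}$; (c) that a solution starting in $\mathcal{X}$ can never reach $\partial\mathcal{P}$, so $\mathcal{X}$ is forward invariant; and (d) convergence to $x^*$ by LaSalle.

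\emph{Step (a).} Since $\overline{\delta}_k=1-\cos\theta_{ij}^*$, Lemma~\ref{lemma:convexlyap} makes $V$ strongly convex, hence coercive with compact convex sublevel sets; moreover $\nabla V(x^*)=0$ (because $\phi_k(0)=0$) and $V(x^*)=0$, so $x^*$ is the unique global minimizer, and since $x^*$ is an interior point of $\mathcal{P}\cap\mathcal{S}$ (the standing assumption $\lVert E^T\theta^*\rVert_{\infty}<\pi/2\le\overline{\theta}_{ij}$ together with $\overline{\omega}_i>0$) we get $V^*,W^*>0$ and $x^*$ interior to $\{V\le V_{\max}\}$. Now $W^*$ is exactly $\min_{x\in\partial\mathcal{S}}V(x)$. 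If some $x_0\in\{V\le V_{\max}\}$ violated $|\omega_i|\le\overline{\omega}_i$, the segment $[x^*,x_0]$—contained in $\{V\le V_{\max}\}$ by convexity—would meet $\mathcal{S}_i^{-}\cup\mathcal{S}_i^{+}$ at an interior point $\bar x$ with $V(\bar x)\le V_{\max}\le W^*\le V(\bar x)$; but convexity of $t\mapsto V(x^*+t(x_0-x^*))$ with $V(x^*)=0<V_{\max}=V(\bar x)$ and $\bar x$ strictly between then forces $V(x_0)>V_{\max}$, a contradiction. Hence $\{V\le V_{\max}\}\subseteq\mathcal{S}$ and $\mathcal{X}\subseteq\mathcal{P}\cap\mathcal{S}$.

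\emph{Step (b).} Differentiating \eqref{eq:lyapunov} along $\dot x=Ax-B\phi(z)$, $z=Cx$, and using $CB=\mathbb{0}$ (so $\dot z=CAx$), I would add the multiplier term $2(\phi-\underline{\Delta}z)^{T}\Gamma(\overline{\Delta}z-\phi)$ and collect everything into the identity $\dot V=\begin{bmatrix}x\\\phi\end{bmatrix}^{T}R\begin{bmatrix}x\\\phi\end{bmatrix}-2(\phi-\underline{\Delta}z)^{T}\Gamma(\overline{\Delta}z-\phi)$, with $R$ as in \eqref{eq:R}; this is a routine computation once $CB=\mathbb{0}$ is noted. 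On $\mathcal{P}$ (which contains $\mathcal{P}_2$), Lemma~\ref{lemma:sector} places each $\phi_k$ in the sector $[\underline{\delta}_k,\overline{\delta}_k]$, so, $\Gamma$ being positive diagonal, the multiplier term is $\ge 0$; together with $R\preceq 0$ this yields $\dot V\le 0$ for every $x\in\mathcal{P}$.

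\emph{Steps (c)--(d).} Let $x_f\in\mathcal{X}$ and $t^*=\sup\{T\ge 0:\,x(t)\in\mathcal{P}\ \forall t\in[0,T]\}$. On $[0,t^*]$, step (b) gives $V(x(t))\le V(x_f)\le V_{\max}$, so $x(t)\in\mathcal{X}$; if $t^*=\infty$, invariance is proved. Otherwise $x(t^*)\in\partial\mathcal{P}$; picking an edge whose constraint is violated for $t$ just above $t^*$, say $\theta_{ij}(t^*)=\overline{\theta}_{ij}$, continuity forces $\dot\theta_{ij}(t^*)=\omega_i(t^*)-\omega_j(t^*)\ge 0$, so $x(t^*)$ is feasible for the program defining $V_{ij}^{+}$ and $V(x(t^*))\ge V_{ij}^{+}\ge V_{\max}$; hence $V\equiv V_{\max}$ on $[0,t^*]$ and $\dot V\equiv 0$ there. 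In the identity of step (b) both terms must then vanish (one is $\le 0$, the other $\ge 0$ on $\mathcal{P}$), and the vanishing of the multiplier term forces $z\equiv 0$—each $\phi_k$ is strictly interior to its sector off $z_k=0$—hence $\theta\equiv\theta^*$ by connectedness of $\mathcal{G}$ and the reference bus; this contradicts $\theta_{ij}(t^*)=\overline{\theta}_{ij}\neq\theta_{ij}^*$. So $t^*=\infty$ and $\mathcal{X}$ is compact and invariant. LaSalle's invariance principle applied on $\mathcal{X}$ then says the solution tends to the largest invariant subset of $\{x\in\mathcal{X}:\dot V(x)=0\}$; by the same pointwise argument $z=0$ there, hence $\theta=\theta^*$, hence $\omega=\dot\theta=0$, so that set is $\{x^*\}$ and $x(t)\to x^*$. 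The part I expect to be the main obstacle is step (c): ruling out the degenerate ``grazing'' scenario in which $V$ attains the level $V_{\max}$ exactly on $\partial\mathcal{P}$; this is handled not by an immediate contradiction but by extracting information from $\dot V\equiv 0$, namely the strict-sector property of $\phi_k$ away from $z_k=0$ and the use of graph connectedness and the infinite bus to conclude $\theta\equiv\theta^*$—and the same mechanism is what pins the LaSalle limit set.
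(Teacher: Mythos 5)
Your proposal follows essentially the same route as the paper: the same decomposition $\dot V=[x^T\,\phi^T]R[x^T\,\phi^T]^T+[x^T\,\phi^T]\tilde Q[x^T\,\phi^T]^T$ via the sector bound and S-procedure to get $\dot V\le 0$ on $\mathcal{P}$, and the same use of the $\omega_i\gtrless\omega_j$ constraints in \eqref{eq:opt1} to show the sublevel set can meet $\partial\mathcal{P}$ only on its in-flow part. You are in fact more careful than the paper on the points it glosses over (the convexity argument for $\{V\le V_{\max}\}\subseteq\mathcal{S}$, the grazing case, and convergence via LaSalle rather than the paper's asserted $\dot V<0$); the only overclaim is that the multiplier term vanishes \emph{only} at $z=0$ --- the lower sector bound $\underline{\delta}_k$ is attained at the extreme admissible $\theta_{ij}$ (cf.\ the proof of Lemma~\ref{lemma:sector}), so strict interiority fails at isolated nonzero $z_k$ and a short continuity/invariance argument is needed there, a gap the paper's own proof shares.
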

\begin{proof}
The derivative of $V$ along the system trajectory, $\dot{V}(x)= \dot{x}^T P x + x^T P \dot{x} + (\overline{\Delta} z - \phi)^T\Lambda \dot{z} + \dot{z}^T \Lambda (\overline{\Delta} z - \phi)$, can be written as $\dot{V}(x)=[x^T \; \phi^T] \; Q \; [x^T \; \phi^T]^T$, with
\begin{equation*}
Q = 
\begin{bmatrix}
Q_{11} & Q_{12} \\
Q_{12}^T & \mathbb{0}
\end{bmatrix}
\end{equation*}
where

$Q_{11} \; := \; A^T \left(P+C^T \Lambda \overline{\Delta} C\right) + \left(P+C^T \overline{\Delta}  \Lambda C\right) A$

$Q_{12} \; := \; -P B - A^T C^T \Lambda$

The sector bound \eqref{eq:sector} implies $2 (\phi-\overline{\Delta} z)^T \Gamma (\phi-\underline{\Delta} z) \leq 0$ inside $\mathcal{P}$, for any diagonal matrix $\Gamma \succeq 0$. This condition can be written in an LMI form as $[x^T \; \phi^T] \; \tilde{Q} \; [x^T \; \phi^T]^T \leq 0$, with
\begin{equation*}
\tilde{Q} = 
\begin{bmatrix}
2 C^T \underline{\Delta} \Gamma \overline{\Delta} C & - C^T (\underline{\Delta}+\overline{\Delta}) \Gamma \\
-\Gamma (\underline{\Delta}+\overline{\Delta}) C& 2 \Gamma
\end{bmatrix}
\end{equation*}

Notice that $[x^T \; \phi^T] \; \tilde{Q} \; [x^T \; \phi^T]^T = 0$ only at the equilibrium and at some points on the boundary of $\mathcal{P}$. Using the S-lemma, we obtain the sufficient stability condition $R = Q-\tilde{Q} \preceq 0$ inside $\mathcal{P}$. Therefore, if $R \preceq 0$, then $\dot{V}(x) < 0$, and the Lyapunov function $V(x)$ is decreasing inside $\mathcal{P}$.

Now, notice that $V(x)=W^*$ is the minimum level set of $V(x)$ that intersects the boundary of $\mathcal{S}$. On the other hand, $V(x)=V^*$ is the minimum level set of $V(x)$ that intersects the out-flow boundary of $\mathcal{P}$, composed by the boundary segments characterized by $|\theta_{ij}| = \overline{\theta}_{ij}$ and $\theta_{ij}\dot{\theta}_{ij}=\theta_{ij}(\omega_i-\omega_j) \geq 0$. By doing so, the level set $V(x)=V^*$ may intersect the boundary of $\mathcal{P}$, but only on the segments which don't allow the system trajectory to escape $\mathcal{P}$. 

In conclusion, $\mathcal{X} = \{x: \; V(x) \leq V_{\text{max}}\} \, \cap \, \mathcal{P}$ is an invariant set contained in $\mathcal{P} \; \cap \; \mathcal{S}$.
\end{proof}

Therefore, an invariant set contained inside the polytope of operational constraints is computed using the set of $2 (|\mathcal{E}|+n)$ convex optimizations \eqref{eq:opt1} and \eqref{eq:opt2}, whenever the LMI condition $R \preceq 0$ is satisfied.

\section{Robustness measure of transient stability}
\label{sec:robust}

In this section we concentrate on the second condition stated in Section \ref{probform}.
We thus analyze the effect of the disturbance $\eta$ on the dynamics of the power system. Specifically, we will find an $\mathcal{L}_{\infty}$ bound $\overline{\eta}$ on $\eta$ such that the set $\mathcal{X}$ defined in Theorem \ref{thm1} remains invariant. The methodology used builds upon a local version of the Input-to-State Stability (ISS) theory, as described in the following.

\subsection{Local Input-to-State Stability}
\label{subsec:liss}

Before moving on to the stability concepts, we recall the definitions of comparison functions. A continuous function $\gamma : \mathbb{R}_+ \rightarrow \mathbb{R}_+$ is said to be of class $\mathcal{K}$ if it is strictly increasing and $\gamma(0)=0$. It is of class $\mathcal{K}_{\infty}$ if, in addition, it is unbounded. A continuous function $\beta : \mathbb{R}_+ \times \mathbb{R}_+ \rightarrow \mathbb{R}_+$ is said to be of class $\mathcal{KL}$ if, for fixed $t$, the function $\beta(\cdot,t)$ is of class $\mathcal{K}$ and, for fixed $s$, the function $\beta(s,\cdot)$ is strictly decreasing and tends to $0$.

Recall the power system dynamics written in Lur'e form:
\begin{equation}
\label{eq:modelLISS}
\dot{x} = A x - B \phi(C x) + H \eta
\end{equation}

Let $\Omega$ define a local region of initial states for the system \eqref{eq:modelLISS} with $\eta = \mathbb{0}$, i.e., a compact positively invariant set (containing the post-fault equilibrium point as an interior point). Moreover, let $\Xi$ define a local region of external inputs (disturbances) $\eta$, defined as $\Xi = \{\eta: \lVert \eta \rVert_{\mathcal{L}_{\infty}} \leq \overline{\eta}\}$.

In the following we will introduce the concepts of local Input-to-State Stability (LISS) and LISS-Lyapunov function.

\begin{definition}
The system \eqref{eq:modelLISS} is LISS if there exist functions $\beta \in \mathcal{KL}$ and $\gamma \in \mathcal{K}$ such that for any initial state $x_0 \in \Omega$ and disturbance $\eta \in \Xi$,
\begin{equation}
\lVert x(t, x_0, \eta)\rVert \leq \beta(\lVert x_0\rVert, t)+\gamma(\lVert \eta \rVert_{\mathcal{L}_{\infty}}), \quad \forall \; t \geq 0
\end{equation}
\end{definition}

\begin{definition}
\label{def2}
A smooth function $V(x):\Omega \rightarrow \mathbb{R}_{+}$ is called a LISS-Lyapunov function if there exist functions $\psi_1$, $\psi_2$ $\in \mathcal{K}_{\infty}$, $\chi$ $\in \mathcal{K}_{\infty}$, and $\psi$ $\in \mathcal{K}$ such that for any initial state $x_0 \in \Omega$ and disturbance $\eta \in \Xi$,
\begin{align}
\label{eq:lissLyap}
\begin{split}
& \psi_1(\lVert x\rVert) \leq V(x) \leq \psi_2(\lVert x\rVert), \quad \forall \; x \in \Omega \\
& \dot{V}(x) \leq -\psi(\lVert x\rVert), \quad \forall \; x \in \Omega, \; \lVert x\rVert \geq \chi(\lVert \eta\rVert_{\mathcal{L}_{\infty}})
\end{split}
\end{align}
\end{definition}

These two concepts are equivalent, as stated in the following lemma.

\begin{lemma}[\cite{mironchenko2016local}]
\label{lemma:LISS}
 The system \eqref{eq:modelLISS} is LISS if and only if it admits a LISS-Lyapunov function.
\end{lemma}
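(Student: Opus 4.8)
The plan is to prove the two implications separately, following the standard LISS theory of \cite{mironchenko2016local}. For \emph{sufficiency} (a LISS-Lyapunov function implies LISS), fix $x_0 \in \Omega$ and $\eta \in \Xi$. From $V(x) \le \psi_2(\|x\|)$ one has $\|x\| \ge \psi_2^{-1}(V(x))$, so on the region where $V(x) > \psi_2(\chi(\|\eta\|_{\mathcal L_\infty}))$ the second line of \eqref{eq:lissLyap} gives $\dot V(x) \le -\psi(\psi_2^{-1}(V(x))) =: -\alpha(V(x))$ with $\alpha \in \mathcal K$. Hence the sublevel set $\{V \le \psi_2(\chi(\|\eta\|_{\mathcal L_\infty}))\}$ is forward invariant, and a comparison-lemma argument produces a $\mathcal{KL}$ function $\tilde\beta$ with $V(x(t)) \le \max\{\tilde\beta(V(x_0),t),\,\psi_2(\chi(\|\eta\|_{\mathcal L_\infty}))\}$ for all $t \ge 0$. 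Using $\psi_1(\|x\|) \le V(x) \le \psi_2(\|x\|)$ together with the elementary bound $\rho(a+b)\le\rho(2a)+\rho(2b)$ for $\rho \in \mathcal K$ yields the ISS estimate with $\beta(s,t) = \psi_1^{-1}(2\tilde\beta(\psi_2(s),t)) \in \mathcal{KL}$ and $\gamma(r) = \psi_1^{-1}(2\psi_2(\chi(r))) \in \mathcal K$; the trajectory stays in $\Omega$ along the way provided $\overline\eta$ is small enough that $\psi_2(\chi(\overline\eta))$ does not exceed the level of the largest sublevel set of $V$ contained in $\Omega$ (possible since $x^*$ lies in the interior of $\Omega$, $V(x^*)=0$, and $\chi,\psi_2 \in \mathcal K_\infty$).

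For \emph{necessity} (LISS implies a LISS-Lyapunov function), I would exploit the fact that \eqref{eq:modelLISS} is affine in $\eta$. LISS with $\eta \equiv 0$ makes $x^*$ asymptotically stable with region of attraction containing $\Omega$, so by the classical (Massera--Kurzweil) converse Lyapunov theorem there is a smooth $V$ on a neighborhood of $\Omega$ with $\psi_1(\|x\|)\le V(x)\le\psi_2(\|x\|)$ and $\nabla V(x)^\t\big(Ax - B\phi(Cx)\big) \le -\psi_3(\|x\|)$ for some $\psi_1,\psi_2,\psi_3 \in \mathcal K_\infty$. Since the disturbance enters only through $+H\eta$, along \eqref{eq:modelLISS} one gets $\dot V(x) \le -\psi_3(\|x\|) + \|\nabla V(x)\|\,\|H\|\,\|\eta\|_{\mathcal L_\infty}$, and bounding $\|\nabla V(x)\|$ on the relevant compact set by a constant $\ell$ shows $\dot V(x) < 0$ whenever $\|x\| > \psi_3^{-1}(\ell\|H\|\,\|\eta\|_{\mathcal L_\infty}) =: \chi(\|\eta\|_{\mathcal L_\infty})$, i.e., $V$ is a LISS-Lyapunov function in the sense of Definition \ref{def2} (after, if needed, shrinking $\Omega$ and $\Xi$ so all bounds hold on a common compact set). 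For a general, non-affine nonlinearity one would instead invoke Sontag's lemma on $\mathcal{KL}$ estimates and build $V$ as a weighted supremum of the form $V(x) = \sup_{t\ge 0,\,\eta}\, g\big(\|x(t,x,\eta)\|\big)\,e^{\lambda t}$ followed by a smoothing step, which is the route taken in \cite{mironchenko2016local}.

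The main obstacle lies entirely in the necessity direction: producing a \emph{smooth} Lyapunov function and controlling the estimates \emph{uniformly over the disturbance class} $\Xi$ while keeping everything inside the prescribed local set $\Omega$ --- in particular, the possible need to shrink $\Omega$ or $\Xi$, and the appeal to a converse Lyapunov theorem (or to Sontag's $\mathcal{KL}$-domination plus a mollification argument) to obtain smoothness. Since this lemma is only used as a tool in what follows, I would not reproduce the construction and simply cite \cite{mironchenko2016local}.
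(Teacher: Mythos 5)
Your proposal is consistent with the paper: the paper gives no proof of this lemma at all, importing it directly from \cite{mironchenko2016local}, and you likewise conclude by citing that reference. The sketch you provide (comparison-lemma argument for sufficiency, converse-Lyapunov/0-GAS argument for necessity in the disturbance-affine case) is the standard one and is correct up to routine bookkeeping, e.g.\ inserting a factor of $2$ so that the gain condition $\|x\|\ge\psi_3^{-1}(2\ell\|H\|\,\|\eta\|_{\mathcal L_\infty})$ yields the required decay $\dot V\le-\tfrac12\psi_3(\|x\|)$ rather than mere negativity.
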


These concepts will be used in the following to prove that the system \eqref{eq:modelLISS} is LISS, and to find a robust stability certificate for the transient stability assessment.

\subsection{Robust stability certificate}
\label{subsec:certificate}

In the following we use the concepts stated above to propose a method to compute a local region of external disturbances $\Xi$, such that for $\eta \in \Xi$ and $\Omega = \mathcal{X}$ (with $\mathcal{X}$ defined in Theorem \ref{thm1}), the system \eqref{eq:modelLISS} is LISS.

Consider the following set of $2|\mathcal{E}|$ convex optimization problems:
\begin{equation}
\label{eq:opt3}
\begin{aligned}
\hat{V}_{ij}^- = \; & \underset{x}{\text{min}} 
& & V(x) \\
&\text{s.t.} 
& & x \in \mathcal{P}_{ij}^- \\
\end{aligned} \quad\quad\quad
\begin{aligned}
\hat{V}_{ij}^+ =\; & \underset{x}{\text{min}} 
& & V(x) \\
&\text{s.t.} 
& & x \in \mathcal{P}_{ij}^+ \\
\end{aligned}
\end{equation}

Now let $\hat{V}^* = \text{min}\{\hat{V}_{ij}^-,\hat{V}_{ij}^+\}_{(i,j) \in \mathcal{E}}$. We define $\hat{V}_{\text{max}} = \text{min}\{\hat{V}^*,W^*\}$. Notice that $V(x) = \hat{V}_{\text{max}}$ defines the maximum level set of $V(x)$ which resides completely inside the polytope $\mathcal{P} \; \cap \; \mathcal{S}$. As a consequence, $\hat{V}_{\text{max}} \leq V_{\text{max}}$.

We are now ready to state the main result of this section.

\begin{theorem}
\label{thm2}
Consider the local region of initial states $\Omega = \mathcal{X}$, with $\mathcal{X}$ defined in Theorem \ref{thm1}. If the matrix $R$ in \eqref{eq:R} satisfies $R \prec 0$, then the system \eqref{eq:modelLISS} is LISS with a local region of external disturbances $\Xi = \{\eta: \lVert \eta \rVert_{\mathcal{L}_{\infty}} < \overline{\eta}\}$,
\begin{equation}
\label{eq:eta}
\overline{\eta} = \frac{\sigma_{min}(-R)}{2\lVert P H \rVert\sqrt{\sigma_{max}(P) + \mu \, \lVert C \rVert^2}}\sqrt{\hat{V}_{\text{max}}}
\end{equation}
where $\mu = \text{max}\{\lambda_k (\overline{\delta}_k-\underline{\delta}_k)\}_{k \in \{\,\ldots |\mathcal{E}|\}}$.
\end{theorem}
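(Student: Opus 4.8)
The plan is to exhibit the Lyapunov function $V$ of \eqref{eq:lyapunov} as a LISS--Lyapunov function on $\Omega=\mathcal{X}$ in the sense of Definition~\ref{def2}, and then conclude by Lemma~\ref{lemma:LISS}. First I would establish the sandwich bounds. The lower bound is immediate: by the sector bound \eqref{eq:sector} the integrand $\overline{\delta}_k s-\phi_k(s)$ has the sign of $s$, so the integral term in \eqref{eq:lyapunov} is nonnegative and $V(x)\ge x^{\t}Px\ge\sigma_{min}(P)\lVert x\rVert^2=:\psi_1(\lVert x\rVert)$. For the upper bound, on $\mathcal{P}$ one has $0\le\overline{\delta}_k s-\phi_k(s)\le(\overline{\delta}_k-\underline{\delta}_k)s$ for $s>0$ and the symmetric bound for $s<0$, hence $2\lambda_k\int_0^{z_k}(\overline{\delta}_k s-\phi_k(s))\,ds\le\lambda_k(\overline{\delta}_k-\underline{\delta}_k)z_k^2\le\mu\,z_k^2$; summing over $k$ and using $\lVert z\rVert^2=\lVert Cx\rVert^2\le\lVert C\rVert^2\lVert x\rVert^2$ gives $V(x)\le(\sigma_{max}(P)+\mu\lVert C\rVert^2)\lVert x\rVert^2=:\psi_2(\lVert x\rVert)$, which is exactly the quantity under the square root in \eqref{eq:eta}. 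Both $\psi_1,\psi_2$ are of class $\mathcal{K}_{\infty}$.

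Next I would compute $\dot V$ along the disturbed dynamics \eqref{eq:modelLISS}. Writing $\nabla V(x)=2Px+2C^{\t}\Lambda(\overline{\Delta}z-\phi)$ and using the structure $C=[E^{\t}\;\mathbb{0}]$, $H=[\mathbb{0};\,M^{-1}]$, the cross contribution $2(C^{\t}\Lambda(\overline{\Delta}z-\phi))^{\t}H\eta$ vanishes (the first factor has no bottom block while $H$ has no top block), so $\dot V(x)=[x^{\t}\;\phi^{\t}]\,Q\,[x^{\t}\;\phi^{\t}]^{\t}+2x^{\t}PH\eta$ with $Q$ as in the proof of Theorem~\ref{thm1}. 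Reusing the S--lemma step there, inside $\mathcal{P}$ the sector constraint yields $[x^{\t}\;\phi^{\t}]Q[x^{\t}\;\phi^{\t}]^{\t}\le[x^{\t}\;\phi^{\t}]R[x^{\t}\;\phi^{\t}]^{\t}\le-\sigma_{min}(-R)\lVert x\rVert^2$ (using $R\prec0$ and $\lVert[x^{\t}\;\phi^{\t}]\rVert\ge\lVert x\rVert$), while $2x^{\t}PH\eta\le2\lVert PH\rVert\,\lVert x\rVert\,\lVert\eta\rVert_{\mathcal{L}_{\infty}}$. Thus, for $x\in\mathcal{P}$, $\dot V(x)\le-\sigma_{min}(-R)\lVert x\rVert^2+2\lVert PH\rVert\,\lVert\eta\rVert_{\mathcal{L}_{\infty}}\lVert x\rVert$, which is strictly negative as soon as $\lVert x\rVert>\tfrac{2\lVert PH\rVert}{\sigma_{min}(-R)}\lVert\eta\rVert_{\mathcal{L}_{\infty}}$. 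Hence for any $\epsilon\in(0,1)$ the choices $\chi(r)=\tfrac{2\lVert PH\rVert}{(1-\epsilon)\sigma_{min}(-R)}r$ and $\psi(r)=\epsilon\,\sigma_{min}(-R)\,r^2$ give $\dot V(x)\le-\psi(\lVert x\rVert)$ whenever $x\in\mathcal{X}\subseteq\mathcal{P}$ and $\lVert x\rVert\ge\chi(\lVert\eta\rVert_{\mathcal{L}_{\infty}})$, i.e. the decrease condition of Definition~\ref{def2}.

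The step I expect to be the main obstacle is verifying that $\mathcal{X}$ stays positively invariant for \eqref{eq:modelLISS} when $\lVert\eta\rVert_{\mathcal{L}_{\infty}}<\overline{\eta}$, which is what legitimizes applying Definition~\ref{def2}/Lemma~\ref{lemma:LISS} with $\Omega=\mathcal{X}$ and at the same time proves condition (ii) of Section~\ref{probform}. The idea is that $\{x:\lVert x\rVert\le\chi(\lVert\eta\rVert_{\mathcal{L}_{\infty}})\}\subseteq\{V\le\psi_2(\chi(\lVert\eta\rVert_{\mathcal{L}_{\infty}}))\}$, and imposing $\lVert\eta\rVert_{\mathcal{L}_{\infty}}<\overline{\eta}$ (then taking $\epsilon$ small enough) forces $\psi_2(\chi(\lVert\eta\rVert_{\mathcal{L}_{\infty}}))<\hat{V}_{\text{max}}$, so this sublevel set sits strictly inside the safe region $\{V\le\hat{V}_{\text{max}}\}\subseteq\mathcal{P}\cap\mathcal{S}$; indeed, solving $\psi_2(\chi(\cdot))=\hat{V}_{\text{max}}$ for its argument is precisely what yields the constant \eqref{eq:eta}. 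Consequently, on $\mathcal{X}\setminus\{V\le\psi_2(\chi(\lVert\eta\rVert_{\mathcal{L}_{\infty}}))\}$ one has $\lVert x\rVert>\chi(\lVert\eta\rVert_{\mathcal{L}_{\infty}})$ (via $V(x)\le\psi_2(\lVert x\rVert)$) and hence $\dot V(x)<0$; in particular $\dot V<0$ on the part of $\partial\mathcal{X}$ lying in $\{V=V_{\text{max}}\}\cap\mathcal{P}$, so the trajectory cannot exit there, while on the out-flow part of $\partial\mathcal{P}$ the argument of Theorem~\ref{thm1} applies verbatim because the disturbance enters only the $\omega$--equation and therefore does not change $\dot{\theta}_{ij}=\omega_i-\omega_j$ nor the in-flow/out-flow classification of $\partial\mathcal{P}$. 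With $\mathcal{X}$ positively invariant under \eqref{eq:modelLISS} for $\eta\in\Xi$ and the bounds above, $V$ is a LISS--Lyapunov function on $(\Omega,\Xi)$, and Lemma~\ref{lemma:LISS} gives that \eqref{eq:modelLISS} is LISS, which completes the argument.
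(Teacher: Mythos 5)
Your proposal is correct and follows the same route as the paper: the sandwich bounds $\sigma_{min}(P)\lVert x\rVert^2 \le V(x) \le (\sigma_{max}(P)+\mu\lVert C\rVert^2)\lVert x\rVert^2$, the dissipation estimate $\dot V \le -\sigma_{min}(-R)\lVert x\rVert^2 + 2\lVert PH\rVert\,\lVert x\rVert\,\lVert\eta\rVert_{\mathcal{L}_\infty}$ via the S-lemma step of Theorem \ref{thm1}, and the conclusion through Definition \ref{def2} and Lemma \ref{lemma:LISS}. You additionally make explicit two points the paper leaves implicit --- that $\overline{\eta}$ is exactly the solution of $\psi_2(\chi(\overline{\eta}))=\hat{V}_{\text{max}}$, and that positive invariance of $\mathcal{X}$ under the disturbed dynamics still holds because the disturbance does not alter $\dot{\theta}_{ij}$ and hence the in-flow/out-flow classification of $\partial\mathcal{P}$ --- which strengthens rather than departs from the published argument.
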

\begin{proof}
The function $V(x)$ can be lower and upper bounded $\psi_1 \lVert x \rVert^2 \leq V(x) \leq \psi_2 \lVert x \rVert^2$, with $\psi_1 = \sigma_{\text{min}}(P)$ and $\psi_2 = \sigma_{\text{max}}(P)+\mu \lVert C \rVert^2$.

The lower bound is obtained from the quadratic function $x^T P x$, since $\lambda_k \left(\overline{\delta}_k s - \phi_k(s)\right) \geq 0$. The upper bound is obtained in a similar way, by noticing that $\lambda_k \left(\overline{\delta}_k s - \phi_k(s)\right) \leq \lambda_k \left(\overline{\delta}_k s - \underline{\delta}_k s\right) \leq \mu s$.

The derivative of $V(x)$ with respect to time is
\begin{equation}
\dot{V}(x) = 
\begin{bmatrix}
x \\
\phi
\end{bmatrix}^T
Q 
\begin{bmatrix}
x \\
\phi
\end{bmatrix}
+ 2 x^T P H \eta
\end{equation}
with matrix $Q$ defined in the proof of Theorem \ref{thm1}. Therefore $\dot{V}(x)$ can be locally bounded as
\begin{equation}
\dot{V}(x) \leq -\sigma_{\text{min}}(-R) \lVert x \rVert^2 + 2 \lVert P H \rVert \lVert x \rVert \lVert \eta \rVert_{\mathcal{L}_{\infty}}
\end{equation}

Now, for $\eta \in \Xi = \{\eta: \lVert \eta \rVert_{\mathcal{L}_{\infty}} < \overline{\eta}\}$, with $\overline{\eta}$ defined in \eqref{eq:eta}, $V(x)$ can be rewritten in the form of Definition \ref{def2}, concluding that $V(x)$ is a LISS-Lyapunov function and the system \eqref{eq:modelLISS} is LISS. 

Notice that the level set used in \eqref{eq:eta} to compute the upper bound on $\overline{\eta}$ is $V(x) = \hat{V}_{\text{max}}$. The reason why we cannot use $V_{\text{max}}$ is because $V(x) = V_{\text{max}}$ intersects the boundary of $\mathcal{P}$ and, with the disturbance $\eta$, the trajectory may not be pushed back inside of $\mathcal{P}$ once it touches its boundary. Therefore, a level set which is completely inside the operational constraints needs to be used.
\end{proof}

This result ensures that the set $\mathcal{X}$ remains invariant for any disturbance $\eta$, with $\lVert \eta \rVert_{\mathcal{L}_{\infty}} \leq \overline{\eta}$. The following corollary translates Theorem \ref{thm1} and Theorem \ref{thm2} in a robust transient stability certificate for power systems.

\begin{corollary}
Consider a power system described by the Lur'e representation model \eqref{eq:modelLISS}. Let $|\theta_{ij}| \leq \overline{\theta}_{ij}$, $\forall (i,j) \in \mathcal{E}$, and $|\omega_i| \leq \overline{\omega}_i$, $\forall i \in \mathcal{N}$, define some operational constraints on the angle differences of neighboring generators and frequencies, respectively. Moreover, let $\eta$ define a time-varying disturbance, entering in the post-fault system dynamics as described in Section \ref{probform}. If $R \prec 0$, and the fault-cleared state resides in the set $\mathcal{X}$, and $\lVert \eta \rVert_{\mathcal{L}_{\infty}} < \overline{\eta}$, with $\mathcal{X}$ and $\overline{\eta}$ defined in Theorem \ref{thm1} and \ref{thm2}, then the post-fault trajectory will never escape the set $\mathcal{X}$, i.e., will never violate the operational constraints. Moreover, the post-fault trajectory will converge to the set $\{x: \; V(x) < \hat{V}_{\text{max}}\}$. In particular, if $\eta = \mathbb{0}$, the post-fault trajectory will converge to the post-fault equilibrium point.
\end{corollary}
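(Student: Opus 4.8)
The plan is to assemble the corollary from Theorems~\ref{thm1} and~\ref{thm2} together with a short direct argument controlling the behaviour of $V$ between the level sets $\hat{V}_{\text{max}}$ and $V_{\text{max}}$. Since $R \prec 0$ implies $R \preceq 0$, Theorem~\ref{thm1} immediately gives that $\mathcal{X}$ is invariant for the undisturbed dynamics and that any undisturbed post-fault trajectory started in $\mathcal{X}$ converges to the post-fault equilibrium point; this settles the last claim (the case $\eta = \mathbb{0}$). It therefore remains to treat $\eta \neq \mathbb{0}$ with $\lVert \eta\rVert_{\mathcal{L}_\infty} < \overline{\eta}$, for which I would reuse the estimate established in the proof of Theorem~\ref{thm2}: inside $\mathcal{P}$ one has $\dot{V}(x) \leq -\sigma_{\text{min}}(-R)\lVert x\rVert^2 + 2\lVert PH\rVert\,\lVert x\rVert\,\lVert \eta\rVert_{\mathcal{L}_\infty}$, while $V(x) \geq \hat{V}_{\text{max}}$ forces $\lVert x\rVert^2 \geq \hat{V}_{\text{max}}/\psi_2$ with $\psi_2 = \sigma_{\text{max}}(P)+\mu\lVert C\rVert^2$. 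Substituting the definition~\eqref{eq:eta} of $\overline{\eta}$ then shows that, for $\lVert \eta\rVert_{\mathcal{L}_\infty} < \overline{\eta}$, $\dot{V}(x) < 0$ at every $x \in \mathcal{P}$ with $V(x) \geq \hat{V}_{\text{max}}$; by compactness of $\{x \in \mathcal{X}: V(x) \geq \hat{V}_{\text{max}}\}$ this sharpens to $\dot{V}(x) \leq -c$ for some $c>0$ on that set. In particular $\{x: V(x) \leq \hat{V}_{\text{max}}\}$, which lies in $\mathcal{P}\cap\mathcal{S}\subseteq\mathcal{X}$, is positively invariant under the disturbed dynamics.

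Next I would establish invariance of $\mathcal{X}$ itself. For a fault-cleared state $x_f \in \mathcal{X}$ with $\hat{V}_{\text{max}} < V(x_f) \leq V_{\text{max}}$, I would argue by contradiction that the trajectory cannot leave $\mathcal{P}$: since the disturbance $H\eta$ does not enter the $\dot{\theta}$ equation, the out-flow boundary of $\mathcal{P}$ coincides with the one used in Theorem~\ref{thm1}, on which $V \geq V^* \geq V_{\text{max}}$, whereas along the trajectory $V$ strictly decreases as long as it stays $\geq \hat{V}_{\text{max}}$ (by the previous paragraph); reaching that boundary would therefore require $V$ never to have decreased, a contradiction. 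Hence $x(t)$ remains in $\mathcal{P}$, and since $V(x(t)) \leq \max\{V(x_f),\hat{V}_{\text{max}}\} \leq V_{\text{max}}$, it remains in $\mathcal{X}$ and thus never violates the operational constraints. The uniform decrease $\dot{V} \leq -c$ on $\{x\in\mathcal{X}: V(x)\geq\hat{V}_{\text{max}}\}$ then drives the trajectory into $\{x: V(x) < \hat{V}_{\text{max}}\}$ in finite time, after which it stays there, which is the asserted convergence; and for $\eta = \mathbb{0}$ the stronger conclusion $x(t)\to x^*$ is again Theorem~\ref{thm1}.

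The step I expect to be the main obstacle is the contradiction argument excluding escape of the disturbed trajectory through $\partial\mathcal{P}$ in the annulus $\hat{V}_{\text{max}} \leq V \leq V_{\text{max}}$: it rests both on the ordering $V_{\text{max}} \leq V^*$ coming from the optimization problems~\eqref{eq:opt1} and on the fact that the disturbance channel $H$ leaves the angle dynamics untouched, so that the geometry of the out-flow boundary is unchanged. Once this is in place, the remaining manipulations — the quadratic bound on $\dot V$, the substitution of~\eqref{eq:eta}, and the finite-time entry argument — are routine.
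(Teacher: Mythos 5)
Your proof is correct and follows the route the paper intends: the paper states this corollary without proof, as a direct combination of Theorems~\ref{thm1} and~\ref{thm2}, and your argument supplies exactly the missing glue. The one step that goes beyond a bare citation of the two theorems --- showing that a disturbed trajectory starting in the annulus $\hat{V}_{\text{max}} < V(x) \leq V_{\text{max}}$ cannot escape through $\partial\mathcal{P}$, because the disturbance channel $H$ leaves $\dot{\theta}_{ij}=\omega_i-\omega_j$ (and hence the out-flow boundary of \eqref{eq:opt1}) unchanged while $V$ strictly decreases there --- is sound and is precisely the detail the paper glosses over.
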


\section{Numerical Validation}
\label{sec:numerical}

This section presents a numerical validation of the methodology proposed in this paper. For illustration, we consider a single-machine infinite-bus (SMIB) system and plot its phase portrait in Figure \ref{fig:reachability}. The dashed black contour represents the biggest invariant set, computed as the maximum level set of the Lyapunov function $V(x)$ inside the region of attraction. Notice that although this level set covers a big portion of the region of attraction, it violates the operational constraints $\lvert\theta\rvert \leq 3\pi/4$ rad and $\lvert\omega\rvert \leq \pi$ rad/s, represented by the dashed horizontal and vertical lines.

\begin{figure}[t]
	\centering
\includegraphics[width=0.75\columnwidth]{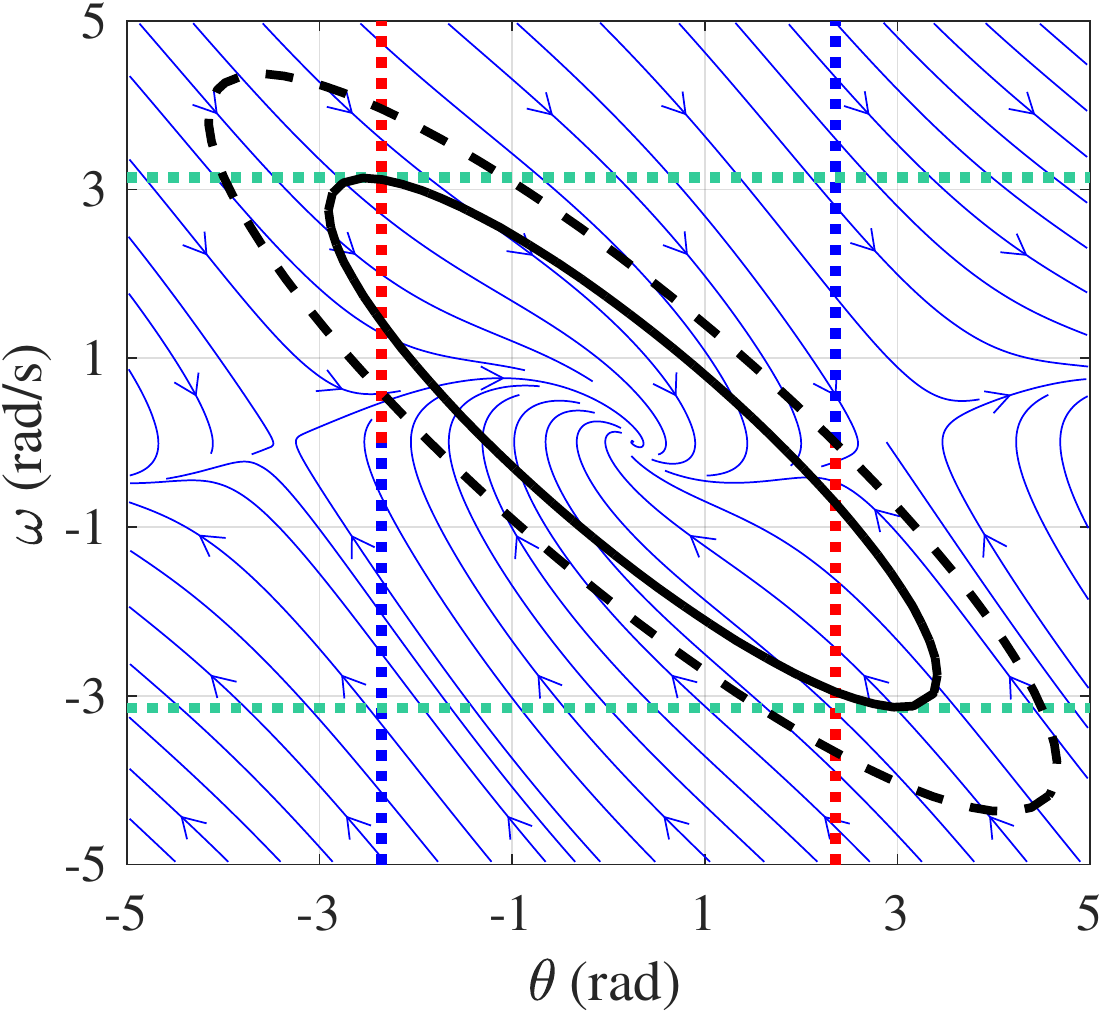}
\caption{Phase portrait and invariant sets for a SMIB system.}
\label{fig:reachability}
\end{figure}

Another level set is then computed using the methods presented in Section \ref{sec:reachability}, and plotted with a continuous black color. Notice that this new level set intersects the angle constraints only on the red dashed half-lines, which forbid the system trajectories to escape. The invariant set $\mathcal{X}$ is defined as the intersection of the new sublevel set with the region confined by the angle constraints. It can be seen from the phase portrait that any system trajectory starting inside $\mathcal{X}$ will converge to the post-fault equilibrium point.

We now consider the IEEE 9 bus system and compute a robustness measure of transient stability, i.e., an $\mathcal{L}_{\infty}$-norm bound $\overline{\eta}$ on the disturbance $\eta$ entering in the post-fault system dynamics such that the set $\mathcal{X}$ remains invariant. A Kron-reduction was applied to the network, and damping coefficients of $10$ p.u. were added to reflect the effect of the primary frequency control action. We consider the following operational constraints: $\lvert\theta_{ij}\rvert \leq \pi/6$ rad, $\forall (i,j) \in \mathcal{E}$, and $\lvert\omega_i\rvert \leq \pi$ rad/s, $\forall i \in \mathcal{N}$. Using the expression \eqref{eq:eta}, presented in Theorem \ref{thm2}, we obtain $\overline{\eta} = 0.0026$.

Notice that such a small value is expected and is mainly due to the fact that $\overline{\eta}$ represents the bound on the maximum disturbance magnitude entering in the post-fault trajectory originating at the worst-case fault-cleared state inside the set $\mathcal{X}$. Since the fault-cleared state can reside very close to the actual boundary of the region of attraction, a very small disturbance can push the trajectory outside of the region of attraction, allowing therefore a very small robustness margin.

In Figure \ref{fig:tds}, the time domain simulation confirms that if the fault-cleared state resides inside the set $\mathcal{X}$, the frequency constraints are not violated.

\begin{figure}[t]
	\centering
\includegraphics[width=0.85\columnwidth]{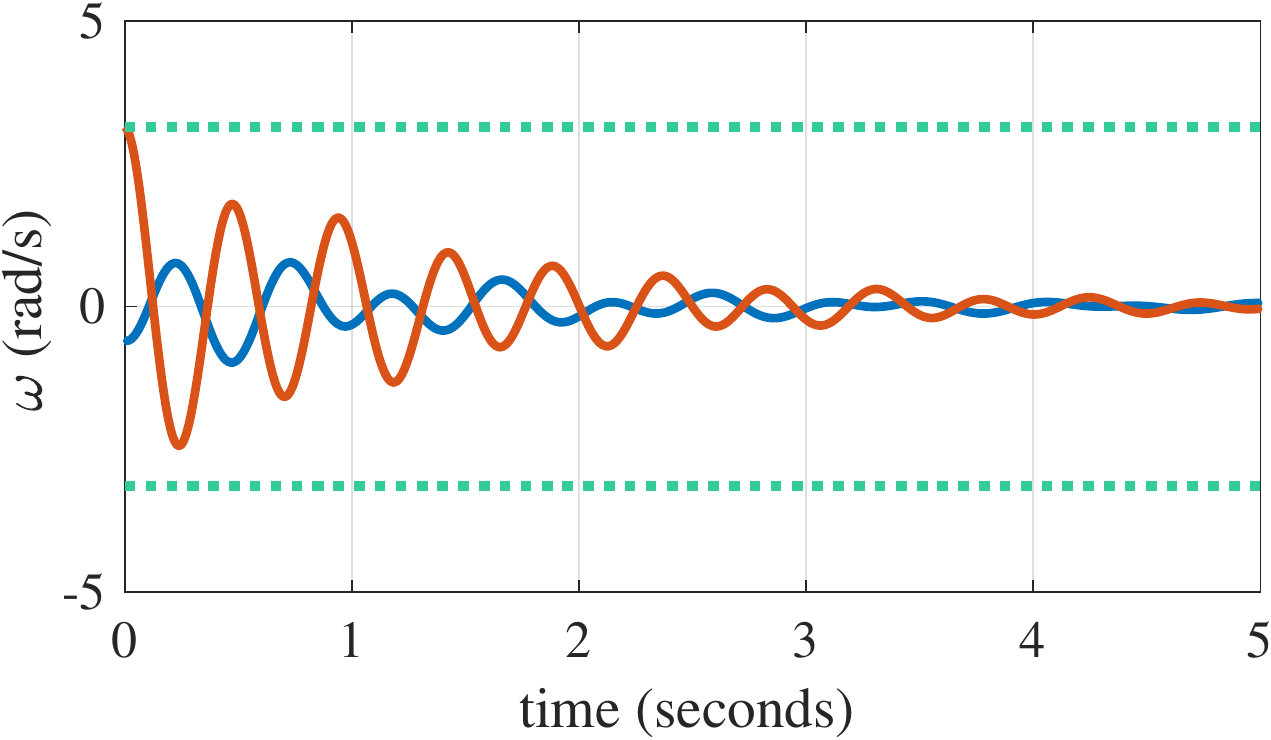}
\caption{Generators frequency dynamics for the IEEE 9 bus system with fault-cleared state inside the set $\mathcal{X}$.}
\label{fig:tds}
\end{figure}

\section{Conclusion}
%\label{sec:conclusion}

In this paper we considered the problem of quantifying the robustness of power systems transient stability, with consideration of operational constraints over frequency deviation and angular separation. To this end, we first proposed a novel convex Lyapunov function, which we employed to efficiently compute an invariant set inside the region of attraction of the post-fault equilibrium point and the operational constraints. If the fault-cleared state resides inside this set, the undisturbed post-fault dynamics will converge to the post-fault equilibrium point, and it will never violate the operational constraints.

We then used local ISS notions and proposed a bound on the magnitude of the disturbance entering in the post-fault system dynamics such that the previously computed set remains invariant. If the fault-cleared state resides inside this set, the disturbed post-fault dynamics will never escape, i.e., it will never leave the region of attraction of the post-fault equilibrium point or violate the operational constraints.

\bibliographystyle{IEEEtran}
\bibliography{lff} 

\end{document}